 \renewcommand{\epsilon}{\varepsilon}
 \newcommand{\newsection}[1]
  {\section{#1}\setcounter{theorem}{0} \setcounter{equation}{0}\par\noindent}
   \newtheorem{theorem}{Theorem}[section]
   \newtheorem{lemma}[theorem]{Lemma}
 \newtheorem{corr}[theorem]{Corollary}
 \newtheorem{proposition}[theorem]{Proposition}
 \newtheorem{deff}[theorem]{Definition}
 \newtheorem{remark}[theorem]{Remark}
  \numberwithin{equation}{section}
 \newcommand{\bth}{\begin{theorem}}
 \newcommand{\ble}{\begin{lemma}}
 \newcommand{\bcor}{\begin{corr}}
 \newcommand{\bdeff}{\begin{deff}}
 \newcommand{\bprop}{\begin{proposition}}
 \def\be{\begin{equation}}
\def\ee{\end{equation}}
\def\bt{\begin{theorem}}
\def\et{\end{theorem}}
\def\ba{\begin{array}}
\def\ea{\end{array}}
\def\bl{\begin{lemma}}
\def\el{\end{lemma}}
\def\ddiv{\mathrm{div}}
 \newcommand{\ele}{\end{lemma}}
 \newcommand{\ecor}{\end{corr}}
 \newcommand{\edeff}{\end{deff}}
 \newcommand{\eprop}{\end{proposition}}
 \renewcommand{\Pi}{\varPi}
 \renewcommand{\epsilon}{\varepsilon}
\title[ A Blow-up criterion for 3-D compressible visco-elasticity]
{ A Blow-up criterion for 3-D compressible visco-elasticity}
\author{Yi Du}
\author{Chun Liu}
\author{Qingtian Zhang}
\address{Department of Mathematics, South China Normal University, duyidy@gmail.com}
\address{Department of Mathematics,Penn. State University, liu@math.psu.edu}
\address{Department of Mathematics,Penn. State University, qingtian.zh@gmail.com}
\date{}
\begin{document}
\maketitle

\begin{abstract}
In this paper, we prove a blow-up criterion for 3D compressible visco-elasticity
in terms of the upper bound of the density and the deformation tensor.
\end{abstract}

{\bf Keywords:}    3D Visco-elasticity, compressible, blow-up criterion.
\newsection{Introduction}

Visco-elastic fluids exhibit the characteristics for both fluid and solid. The elastic behavior of these materials
is attributed to the underlying microstructure or configurations,
the deformation of the structures will cause the  exchange of kinetic energy and
elastic internal energy. The energy exchange is realized through a coupling of
the transport of the internal elastic variables and the induced elastic stress.

To illustrate this coupled interaction, we denote $x(t,X)$ as the Eulerian coordinate, where $X$ is the Lagrangian coordinate of the
particles. The velocity field defined on the Eulerian coordiante  be
\begin{equation}\label{1.1}
u(t,x)=x_t(t,X(t,x)),
\end{equation}
and the deformation tensor is
\begin{equation}\label{1.2}
F(t,x)=H(t,X(t,x))=\frac{\partial x}{\partial X}(t, X),
\end{equation}
which is determined by a
transport equation (see \cite{Gurtin,Larson,Lin1}),
 \begin{equation}\label{1.3}
F_t+u\cdot \nabla F=\nabla uF.
\end{equation}
This equation is regarded as the compatibility condition between velocity $u$ and deformation tensor $F$.
For homogenous, hyper-elastic and isotropic material, the action function is (see \cite{Lei4,Lin1,Liu1})
 \begin{equation}\label{1.4}
A(x)=\int_0^T\int_{\Omega_0}\frac12\rho_0(X)|x_t(t,X)|^2-E(H)-P(\rho(x(X,t),t))\det H dXdt,
\end{equation}
where $\rho_0(X)$ is the density in the undeformed configuration, $\rho$ is the density in the deformed configuration. $\Omega_0$ is the original domain occupied by the material. $E(H)$ is the elastic energy function. $P(\rho)$ is the hydrostatic pressure, satisfying $P(\rho)=C_0\rho^{\gamma}$, with $C_0>0$ and the adiabatic
index $\gamma>1$. $P(\rho)\det H$ represents the internal energy.
Taking a variation of $A(x)$ with respect to the flow map $x(t,X)$,  we get the inviscid momentum equation in Eulerian coordinates (\cite{Lei2})
 \begin{equation}\label{1.5}
\rho(u_t+u\cdot\nabla u)+\nabla P=\nabla\cdot \big(\frac{1}{\det F}\frac{\partial E(F)}{\partial F}F^{T}\big),
\end{equation}
where $F^T$ is the transpose of F, the
 term $\frac{1}{\det F}\frac{\partial E(F)}{\partial F}F^{T}$
is the Cauchy-Green tensor.
When we take the dissipation energy into consideration,
see \cite{Ericksen,Doi}, we can get the corresponding viscosity momentum equation
  \begin{equation}\label{1.6}
\rho(u_t+u\cdot\nabla u)+\nabla P=\mu \Delta u+(\mu+\lambda)\nabla \ddiv u
+\nabla\cdot \big(\frac{1}{\det F}\frac{\partial E(F)}{\partial F}F^{T}\big).
\end{equation}
Combining \eqref{1.3}, \eqref{1.6} and mass conservation equation, with Hookean elasticity,  we get the following  viscoelasticity system
\begin{equation}\label{equation1}
\begin{cases}
\rho_t+\nabla\cdot(\rho u)=0,
\\
\rho(u_t+u\cdot\nabla u)+\nabla P=\mu \Delta u+(\mu+\lambda)\nabla \ddiv u+\nabla\cdot \big(\frac{1}{\det F}FF^{T}\big),
\\
F_t+u\nabla F=\nabla uF.
\end{cases}
\end{equation}
The viscosity coefficient $\mu$, $\lambda$ are constants satisfying
$
\mu\geq0,  \  3\lambda+2\mu\geq0.
$

The system \eqref{equation1}, when $F$ do not exist, it is a compressible Navier-Stokes equations.
The global strong solutions existence for large initial data is still open. Therefore,
there are many efforts to study the blow-up criteria. See \cite{Huang1,zhangzhifei1} and references therein.

When we take the deformation tensor $F$ into consideration, the system \eqref{equation1} will be much more complicate, although the deformation
tensor admits a transport equation which is similar to the continuity equation.
The incompressible visco-elasticity has been  studied by \cite{Lei6,Lei2,Lei4,Lei1,Lei3,Lin1,Liu1} etc.
Precisely, in 2D, the local strong solutions to large data and global strong solutions to small data
 have been studied in \cite{Lei2,Lei4,Lei1,Lei3,Lin1}. The global strong solutions in 3D  with small data have been presented in \cite{Lei1}.
 Subsequently, the blow-up criteria for 2D Oldroyd model have been presented in \cite{Lei5}, a blow-up criterion in 3D  with partial viscosity 
 was proved in \cite{Lei7}.

For the compressible case, The local strong solutions with large data  and global
strong solutions  with small data  to \eqref{equation1} were proved in \cite{Hu1} and \cite{Hu2,Qian1} respectively.
Subsequently, Hu-wang \cite{Hu5}, presented a blow-up criterion for local strong solutions as $\|\nabla u\|_{L^1(0,T,L^\infty(\mathbb{R}^3))}<\infty$. To proceed, we firstly introduce some notations as following
\begin{equation}\nonumber
D^{k}(\mathbb{R}^3)=\{u\in L_{loc}^1(\mathbb{R}^3):\|\nabla^k u\|_{L^2(\mathbb{R}^3) }\},
\end{equation}
\begin{equation}\nonumber
D_0^k(\mathbb{R}^3)=\{u\in L^6(\mathbb{R}^3):\|\nabla^k u\|_{L^2(\mathbb{R}^3)}<\infty
\}.
\end{equation}
In Hu-Wang\cite{Hu5}, they have proved the following results,
\begin{theorem}(Hu-Wang \cite{Hu5}).\label{theorem1}
Assume that the initial data satisfy
$0\leq \rho_0\in H^3(\mathbb{R}^3)$, $u_0\in D_0^1(\mathbb{R}^3)\bigcap D^3(\mathbb{R}^3)$, $F_0 \in H^3(\mathbb{R}^3),
\nabla\cdot(\rho_0F_0)=0$ and
\begin{equation}\nonumber
-\mu \Delta u_0-(\lambda+\mu)\nabla \ddiv u_0+A\nabla \rho_0^\gamma=\rho_0g,
\end{equation}
for some $g\in H^1(\mathbb{R}^3)$ with $\sqrt{\rho_0}g\in L^2(\mathbb{R}^3)$. There exist classical
solutions $(\rho,u,F)$  to (\ref{equation1}) satisfying
\begin{equation}\label{1.9}
\begin{cases}
(\rho,F)\in C([0,T^\star],H^3(\mathbb{R}^3)),\\
u\in C([0,T^\star],D_0^1(\mathbb{R}^3)\bigcap D^3(\mathbb{R}^3))\bigcap L^2(0,T^\star, D^4(\mathbb{R}^3)),\\
u_t\in L^\infty([0,T^\star,D_0^1(\mathbb{R}^3))\bigcap L^2(0,T^\star, D^2(\mathbb{R}^3)),\\
\sqrt{\rho}u_t\in L^\infty([0,T^\star,L^2(\mathbb{R}^3)),
\end{cases}
\end{equation}
where $T^\star$ is the maximal existence time. If $T^\star< \infty$ and $7\mu>\lambda$, then,
\begin{equation}
\lim_{T\rightarrow T^\star}\int_0^T\|\nabla u\|_{L^\infty(\mathbb{R}^3)}dt=\infty.
\end{equation}
\end{theorem}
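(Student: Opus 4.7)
The natural strategy is proof by contradiction together with propagation of the regularity (\ref{1.9}). Assume
$$\int_0^{T^*}\|\nabla u\|_{L^\infty(\mathbb R^3)}\,dt \le M <\infty.$$
The goal is to show that every norm appearing in (\ref{1.9}) remains bounded on $[0,T]$ uniformly in $T<T^*$. Once this is established, the local existence statement behind Theorem \ref{theorem1} applied at time $T^*$ produces a solution on $[T^*,T^*+\tau]$ for some $\tau>0$, contradicting the maximality of $T^*$.

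\textbf{Step 1: pointwise bounds on $\rho$ and $F$.} Along the flow of $u$, the continuity equation gives $\frac{D\rho}{Dt}=-\rho\,\ddiv u$ and the deformation equation gives $\frac{DF}{Dt}=\nabla u\,F$. Integrating along characteristics yields
$$\|\rho(t)\|_{L^\infty}\le \|\rho_0\|_{L^\infty}\exp\Bigl(\int_0^t\|\ddiv u\|_{L^\infty}ds\Bigr),\qquad \|F(t)\|_{L^\infty}\le \|F_0\|_{L^\infty}\exp\Bigl(\int_0^t\|\nabla u\|_{L^\infty}ds\Bigr),$$
both finite by the contradiction hypothesis. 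A lower bound and $L^\infty$ bound on $\det F$ follow similarly, so $(\det F)^{-1}$ is harmless.

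\textbf{Step 2: energy and first-order estimates.} Testing the momentum equation against $u$ and using $F$-transport yields the baseline energy identity, controlling $\|\sqrt\rho u\|_{L^2}$, $\|F\|_{L^2}$, and $\nabla u\in L^2_tL^2_x$. Next, testing the momentum equation against $u_t$ and integrating by parts in time produces the standard bound for $\|\nabla u\|_{L^2}$, $\|\sqrt\rho u_t\|_{L^2}^2\in L^1_t$, and (via the Lam\'e elliptic regularity applied to the equation rewritten as $-\mu\Delta u-(\mu+\lambda)\nabla\ddiv u=-\rho u_t-\rho u\cdot\nabla u-\nabla P+\ddiv(FF^T/\det F)$) for $\|\nabla^2 u\|_{L^2}$. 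At this point, $\|\nabla u\|_{L^\infty}\in L^1_t$ still enters linearly, so Gronwall closes these bounds.

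\textbf{Step 3: second- and third-order estimates.} Time-differentiate the momentum equation and test against $u_t$ to obtain $\|\sqrt\rho u_t\|_{L^2}\in L^\infty_t$ and $\nabla u_t\in L^2_tL^2_x$. Couple this with spatial derivatives of the transport equations for $\rho$ and $F$: applying $\nabla^k$ ($k=1,2,3$) to the continuity and $F$-equations, and using Moser/commutator estimates, one controls $\|\rho\|_{H^3}$ and $\|F\|_{H^3}$ in terms of $\|\nabla u\|_{L^\infty}$, $\|\nabla^2 u\|_{L^2\cap L^q}$ (Sobolev) and the previously bounded lower-order norms. Pairing back with Lam\'e elliptic regularity on $\nabla^{k}(\text{momentum})$ closes $\|u\|_{D^3}$ and $\|u\|_{L^2_tD^4}$. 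I would organize all these inequalities into a single Gronwall chain with driving term $\|\nabla u\|_{L^\infty}$, which is integrable by assumption.

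\textbf{Main obstacle.} The delicate point is closing the second/third order energy identity for $u_t$: when expanding $\int \rho|u_{tt}|^2$ and $\int|\nabla u_t|^2$ and pushing the elastic term $\ddiv(FF^T/\det F)$ and the convective term $\rho u\cdot\nabla u$ onto the elliptic Lam\'e part, one encounters quadratic terms in $\nabla u_t$ and in $\nabla^2 u$ that must be absorbed by the coercivity of $-\mu\Delta-(\mu+\lambda)\nabla\ddiv$. Here the algebraic condition $7\mu>\lambda$ arises: after H\"older and Young with sharp constants, absorption requires the Lam\'e constants to satisfy precisely this inequality, failing which the remainder term cannot be dominated. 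Carefully tracking these constants, rather than the Sobolev commutator bookkeeping, is where the real work lies; the remaining estimates then reduce to standard compressible fluid / transport machinery combined with the uniform $L^\infty$ control of $\rho$ and $F$ from Step 1.
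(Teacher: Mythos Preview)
The paper does not prove Theorem~\ref{theorem1}. It is quoted verbatim from Hu--Wang \cite{Hu5} and used as a black box: the authors' own contribution is Theorem~\ref{theorem2}, and the only place Theorem~\ref{theorem1} reappears is in the last two lines of the paper, where the bound $\|\nabla u\|_{L^1(0,T^\star;L^\infty)}<\infty$ derived in Proposition~\ref{prop1} is set against it to reach a contradiction. There is therefore no ``paper's own proof'' to compare your proposal against.

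That said, your outline is the standard contradiction/continuation scheme for blow-up criteria of this type, and Steps~1--3 are the right skeleton. One correction worth flagging: you locate the role of the restriction $7\mu>\lambda$ in the $u_t$--energy estimate (your ``Main obstacle''), but in this family of results the constraint on $\mu,\lambda$ enters earlier, in the Hoff-type higher integrability estimate for $\rho|u|^r$. Compare Lemma~\ref{lem3.2} of the present paper: multiplying the momentum equation by $r|u|^{r-2}u$, the dissipative term yields (cf.\ \eqref{3.5}) the coercivity condition $\mu-(\lambda+\mu)\tfrac{r-2}{4}\ge 0$, and the admissible range of $r>3$ is what forces a bound of the form $\lambda<c\mu$. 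The $u_t$ and higher-order estimates then close by Gronwall with $\|\nabla u\|_{L^\infty}$ as the driving coefficient, without any further algebraic constraint on the Lam\'e constants. If you intend to reconstruct the Hu--Wang proof, that is where you should expect the constant to appear.
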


Enlightened by the work of Sun-Wang-Zhang \cite{zhangzhifei1}, in which the authors presented the blow-up criterion for a
compressible Navier-Stokes in the terms  of $\|\rho\|_{L^\infty(0,T,L^\infty)}$, we will establish a blow-up criterion
in terms of the upper bounds of the density and
deformation tensor for the local strong solution to
the 3D compressible visco-elasticity.
 Our main result states as follows.
\begin{theorem}\label{theorem2}
Assume that $(\rho,u,F)$ is the local strong solution mentioned in \eqref{1.9}, and
$\mu,\lambda$ be as in Theorem \ref{theorem1}. The initial data $\rho_0>\epsilon_0>0$, $u_0$ and $F_0$  as in Theorem \ref{theorem1},
 $T^\star$ is the maximal existence time of the
solution. If $T^\star<\infty$, then we have
\begin{equation}\label{1.10}
\lim_{T\uparrow T^\star}\sup\{\|\rho(t)\|_{L^\infty(0,T;L^\infty(\Omega))}
+\|F\|_{L^\infty(0,T;L^\infty(\Omega))}\}=\infty.
\end{equation}
\end{theorem}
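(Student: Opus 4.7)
The plan is to argue by contradiction. Assume that
$$M_0:=\sup_{T<T^*}\bigl(\|\rho\|_{L^\infty(0,T;L^\infty)}+\|F\|_{L^\infty(0,T;L^\infty)}\bigr)<\infty,$$
and derive $\int_0^{T^*}\|\nabla u\|_{L^\infty}\,dt<\infty$, which contradicts the criterion of Theorem \ref{theorem1}. I would adapt the strategy of Sun--Wang--Zhang \cite{zhangzhifei1} for compressible Navier--Stokes, treating $\ddiv(FF^T/\det F)$ as a controlled perturbation made tractable by the bound on $\|F\|_{L^\infty}$. A standing observation I would use throughout is that $\rho\det F$ is materially conserved: since $D_t\rho=-\rho\,\ddiv u$ and $D_t\det F=\det F\,\ddiv u$, one has $\rho\det F\equiv\rho_0$ along trajectories, so once $\|\ddiv u\|_{L^1(0,T^*;L^\infty)}$ is controlled, both $\rho$ and $\det F$ stay bounded above and below, and in particular the factor $1/\det F$ appearing in the elastic stress is harmless.

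Next, I would build a chain of a priori estimates. First, the basic energy identity (test \eqref{1.6} against $u$ and use \eqref{1.3} together with $D_t\det F=\det F\,\ddiv u$) gives
$$\sup_{t<T^*}\int\!\Big(\tfrac12\rho|u|^2+\tfrac{P(\rho)}{\gamma-1}+\tfrac{|F|^2}{2\det F}\Big)dx+\int_0^{T^*}\!\!\int\bigl(\mu|\nabla u|^2+(\mu+\lambda)(\ddiv u)^2\bigr)dx\,dt\le C.$$
Second, a Hoff-type estimate, obtained by testing \eqref{1.6} against $\dot u:=u_t+u\cdot\nabla u$ and carefully commuting with the transport equation for $F$, yields
$$\sup_{t<T^*}\|\nabla u\|_{L^2}^2+\int_0^{T^*}\!\!\int\rho|\dot u|^2\,dx\,dt\le C\Big(1+\int_0^{T^*}\|\nabla F\|_{L^2}^2\,dt\Big).$$
Third, I would introduce an effective viscous flux $G$ and vorticity $\omega=\nabla\times u$ adapted so as to absorb the leading piece of the elastic stress, so that $G$ satisfies an elliptic equation of the form $\Delta G=\ddiv(\rho\dot u)+R$ with a remainder $R$ depending linearly on $\nabla F$ with $L^\infty$-bounded $F$ coefficients. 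Elliptic regularity then provides $L^p$ bounds on $G$ and $\omega$, and hence on $\ddiv u$ and the whole $\nabla u$, in terms of $\|\rho\dot u\|_{L^p}$ and $\|\nabla F\|_{L^p}$.

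The main obstacle is closing the coupling between $\nabla u$ and $\nabla F$. Differentiating \eqref{1.3} yields $(\nabla F)_t+u\cdot\nabla(\nabla F)=\nabla^2 u\,F+O(\nabla u\,\nabla F)$, so $\|\nabla F\|_{L^q}(t)$ grows like $\int_0^t(\|\nabla u\|_{L^\infty}\|\nabla F\|_{L^q}+\|\nabla^2 u\|_{L^q})\,ds$, while $\|\nabla^2 u\|_{L^q}$ in turn requires $\|\nabla F\|_{L^q}$ through the momentum equation. I would break this cycle by a logarithmic Gronwall argument on
$$\Phi(t):=e+\|\nabla u\|_{L^2}^2+\|\nabla F\|_{L^q}^q+\|\nabla\rho\|_{L^q}^q+\int_0^t\!\!\int\rho|\dot u|^2\,dx\,ds$$
for a fixed $q\in(3,6)$, combined with the Brezis--Wainger-type inequality
$$\|\nabla u\|_{L^\infty}\le C\bigl(1+\|\ddiv u\|_{L^\infty}+\|\omega\|_{L^\infty}\bigr)\ln\Phi(t)+C\|\nabla u\|_{L^2},$$
where $\|\ddiv u\|_{L^\infty}$ and $\|\omega\|_{L^\infty}$ are controlled via the effective viscous flux, Morrey's inequality, and the $L^p$ bound on $\rho\dot u$. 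The role of the hypothesis $\|\rho\|_{L^\infty}+\|F\|_{L^\infty}\le M_0$ is precisely to turn the elastic and pressure contributions into coefficients of $\Phi$ rather than powers of $\Phi$, so that the Gronwall inequality closes. Once $\Phi$ is bounded on $[0,T^*)$, the displayed inequality gives $\int_0^{T^*}\|\nabla u\|_{L^\infty}\,dt<\infty$, contradicting Theorem \ref{theorem1} and completing the proof.
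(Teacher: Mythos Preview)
Your overall contradiction strategy and your endpoint (derive $\int_0^{T^\star}\|\nabla u\|_{L^\infty}\,dt<\infty$ and invoke Theorem \ref{theorem1}) coincide with the paper's. The difference is in the decomposition used to separate the parts of $u$ one can and cannot control directly. The paper does not introduce an effective viscous flux and vorticity; instead it sets $u=v+w$ with $Lv=\nabla P-\nabla\cdot(\tfrac{1}{\det F}FF^T)$ and $Lw=\rho\dot u$, $L$ the Lam\'e operator, and then (i) gets $\|\nabla w\|_{L^\infty_tL^2_x}$, $\|\nabla^2 w\|_{L^2_{t,x}}$ via an energy argument on the parabolic problem for $w$, (ii) bounds $\|\sqrt{\rho}\dot u\|_{L^\infty_tL^2_x}$ and $\|\nabla\dot u\|_{L^2_{t,x}}$ by applying $D_t$ to the momentum equation and testing against $\dot u$, (iii) controls $\|\nabla v\|_{L^\infty}$ through a $BMO$--log estimate for $L$, and (iv) closes with a logarithmic Gronwall on $\|\rho^{1/q}\nabla\rho\|_{L^q}+\|\rho^{1/q}\nabla F\|_{L^q}$. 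Your Hoff/effective-viscous-flux route and Brezis--Wainger inequality play the role of (i)--(iii), and your $\Phi$-Gronwall plays the role of (iv); both schemes rely on the same cancellations in the material derivative of $\tfrac{1}{\det F}FF^T$ so that only $\nabla u$-type terms, not $\nabla F$ derivatives, appear at the crucial step. The Lam\'e decomposition has the mild advantage that the elastic stress is absorbed wholesale into $v$ and one never needs to invent a matrix-valued analogue of the scalar effective viscous flux; your approach keeps closer to the Huang--Li--Xin framework and would reproduce the same conclusion.

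One small point to clean up: you write that the lower bounds on $\rho$ and $\det F$ follow ``once $\|\ddiv u\|_{L^1(0,T^\star;L^\infty)}$ is controlled.'' In fact this is where the paper gains leverage over the earlier heat-conductive result of Sun--Wang--Zhang: since $\rho\det F=\rho_0\ge\epsilon_0$ is an exact identity and $|\det F|\le C\|F\|_{L^\infty}^3\le CM_0^3$, one gets $\rho\ge \epsilon_0/(CM_0^3)$ and $\det F\ge \epsilon_0/M_0$ \emph{directly} from the standing hypothesis, with no need to control $\ddiv u$ first. Using this at the outset removes any circularity from your chain of estimates.
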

\begin{remark}
According to \eqref{2.1}, the condition \eqref{1.10} implies that, if $\|\rho\|_{L^\infty(0,T,L^\infty(\Omega))}=\infty$,
or equivalently $det F=0$, the mass accumulate at one point or the material  volume compress to zero.
If  $\|F\|_{L^\infty(0,T,L^{\infty}(\Omega))}=\infty$,then the deformation leads to
 the  blow-up phenomenon.

\end{remark}

If  $\|\rho(t)\|_{L^\infty(0,T;L^\infty(\Omega))}
$ and  $\|F\|_{L^\infty(0,T;L^{\infty}(\Omega))}
$ are bounded,
we can obtain a high integrability of velocity, which can be used to control the
nonlinear term (See Lemma 3.2). The difficulty is to control the density and deformation, which
satisfy transport equations. To do this, it requires the velocity is bounded in
$L^1(0,T;W^{1,\infty}(\Omega))$. On the other hand, we have to obtain some
priori bounds for  $\nabla \rho$ and $\nabla F$ to prove $u\in L^1(0,T;W^{1,\infty}(\Omega))$, furthermore,
the elasticity term $\nabla\cdot(\frac{1}{\det F}FF^T)$ in the momentum equation  will bring extra
difficulty to us. Thanks to
the structure of the equations, we get the cancelation to the derivatives of $\rho, F$ during our computation,
which brings us the desired result. Moreover, we get the result without the restriction
on $\rho^{-1}$, it is  an advantage compared with Sun-Wang-Zhang \cite{zhangzhifei3}.

In this paper, we follow the line of Sun-Wang-Zhang \cite{zhangzhifei1,zhangzhifei3}, write
\begin{equation}
L\triangleq\mu \Delta  +(\lambda +\mu)\nabla \ddiv
\end{equation}
then, the Lam\'{e} operator $L$ is an elliptical operate.

Generally speaking, the fluid is driven by three kinds of forces which present different mechanisms. They are gradient of  pressure, elastic deformation, and inertial force. According to these mechanisms, we can decompose the velocity field to several parts. Here, since the pressure term and the elasticity term have similar form in the equation, we put them together and decompose velocity field into two parts, that is
\be\label{1.12}
u=v+w,
\ee
with
\be\label{1.13}
Lv=\nabla p-\nabla\cdot(\frac{1}{\det F} FF^T),
\ee
and
\be\label{1.14}
Lw=\rho \dot{u},
\ee
where
$$ \dot{u}=\partial_t u+u\cdot \nabla u. $$
By the decomposition, in order to obtain the regularity of $u$, it is sufficient to consider the regularity of $v$ and $w$, which admit the above elliptic equations.

Our paper is organized as following. In Section 2,
we shall present some preliminaries. Section 3 is to present
some priori estimates. The proof of Theorem 1.2 should be presented
in Section 4.


\section{Preliminaries}

Consider the following boundary value problem for the Lam\'{e} operator $L$
\be\label{lam2.1}
\left\{
\ba{l}
\mu\triangle U+(\mu+\lambda)\nabla\ddiv U=F, \quad \text{ in }\Omega\\
U(x)=0, \quad \text{ on }  \partial \Omega
\ea
\right.\ee
where $\Omega$ is $\mathbb{R}^3$ or a bounded domain in $\mathbb{R}^3$.
We present the following Lemmas, see  \cite{zhangzhifei1,zhangzhifei3} for details.
\bl\label{lem2.1}
Let $q\in(1,\infty)$ and $U$ be a solution of (\ref{lam2.1}). There exists a constant $C$ depending only on $\lambda$, $\mu$, $q$ and $\Omega$ such that the following estimates hold.

(1) If $F\in L^q(\Omega)$, then
\be
\left\{
\ba{l}
\|D^2U\|_{L^q(\mathbb{R}^3)}\leq C\|F\|_{L^q(\mathbb{R}^3)},\\
\|U\|_{W^{2,q}(\Omega)}\leq C\|F\|_{L^q(\Omega)}; \quad \text{if $\Omega$ is a bounded domain.}
\ea\right.
\ee

(2) If $F\in W^{-1,q}(\Omega)$(i.e., $F=\ddiv f$ with $f=(f_{ij})_{3\times3}$, $f_{ij}\in L^q(\Omega)$), then
\be\left\{
\ba{l}
\|DU\|_{L^q(\mathbb{R}^3)}\leq C\|f\|_{L^q(\mathbb{R}^3)},\\
\|U\|_{W^{1,q}(\Omega)}\leq C\|f\|_{L^q(\Omega)}; \quad \text{if $\Omega$ is a bounded domain.}
\ea\right.
\ee

(3) If $F=\ddiv f$ with $f_{ij}=\partial_k H_{ij}^k$ and $h_{ij}^k\in W_0^{1,q}(\Omega)$ for $i,j,k=1,2,3$, then
\be
\|U\|_{L^q(\Omega)}\leq C\|h\|_{L^q(\Omega)}.
\ee
\el

\bl\label{lam2.2}
If $F=\ddiv f$ with $f=(f_{ij})_{3\times3}$, $f_{ij}\in L^\infty(\Omega)\bigcap L^2(\Omega)$, then $\nabla U\in BMO(\Omega)$ and there exists a constant $C$ depending only on $\lambda$, $\mu$ and $\Omega$ such that
\be
\|\nabla U\|_{BMO(\Omega)}\leq C(\|f\|_{L^\infty(\Omega)}+\|f\|_{L^2(\Omega)}).
\ee
\el

\bl\label{lam2.3}
Let $\Omega=\mathbb{R}^3$ or be a bounded Lipschitz domain and $f\in W^{1,q}(\Omega)$ with $q\in(3,\infty)$. There exists a constant $C$ depending on $q$ and the Lipschitz property of $\Omega$ such that
\be
\|f\|_{L^\infty(\Omega)}\leq C(1+\|f\|_{BMO(\Omega)})\ln(e+\|\nabla f\|_{L^q(\Omega)}).
\ee
\el

The following lemma is well-known.
\begin{lemma}\label{lam2.4}
 Let $\rho, F$ as defined before, and $\rho_0$ is
the initial data of $\rho$, then
\begin{equation}\label{2.1}
\rho\cdot \det F=\rho_0.
\end{equation}
\end{lemma}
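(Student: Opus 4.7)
The plan is to show that $\rho \det F$ is transported along particle trajectories, i.e., that its material derivative vanishes, and then to conclude by evaluating at $t=0$ (where $F$ is the identity by the definition \eqref{1.2} since $x(0,X)=X$, so $\det F_0 = 1$).

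First, I would rewrite the continuity equation $\rho_t + \nabla\cdot(\rho u)=0$ in the non-conservative form
\[
D_t \rho \;=\; -\rho\,\ddiv u, \qquad D_t \;\triangleq\; \partial_t + u\cdot\nabla.
\]
Next I would compute $D_t(\det F)$ using Jacobi's formula for the derivative of a determinant, which gives $D_t(\det F) = \det F \cdot \mathrm{tr}(F^{-1} D_t F)$ whenever $F$ is invertible. Substituting the transport equation \eqref{1.3}, which says $D_t F = \nabla u\,F$, one gets
\[
D_t(\det F) \;=\; \det F \cdot \mathrm{tr}(F^{-1}\nabla u\,F) \;=\; \det F \cdot \mathrm{tr}(\nabla u) \;=\; (\ddiv u)\,\det F.
\]

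Combining the two identities via the product rule yields
\[
D_t(\rho\det F) \;=\; (D_t\rho)\det F + \rho\, D_t(\det F) \;=\; -\rho(\ddiv u)\det F + \rho(\ddiv u)\det F \;=\; 0.
\]
Thus $\rho\det F$ is constant along the flow map $X\mapsto x(t,X)$. Since at $t=0$ we have $F(0,X) = \partial x/\partial X|_{t=0} = I$, so $\det F(0,\cdot) = 1$, this constant is precisely $\rho_0(X)$, yielding \eqref{2.1}.

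There is no serious obstacle here; the only point requiring care is the use of Jacobi's formula, which implicitly requires $\det F>0$. This is justified by continuity: $\det F=1$ initially and evolves according to the linear ODE $D_t(\det F)=(\ddiv u)\det F$ along each trajectory, whose unique solution $\det F = \exp\!\int_0^t (\ddiv u)(s,x(s,X))\,ds$ stays strictly positive as long as $u$ is regular, which is guaranteed by the hypothesis that $(\rho,u,F)$ is the local strong solution in the class \eqref{1.9}.
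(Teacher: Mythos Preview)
Your argument is correct and is the standard proof of this conservation law. The paper itself offers no proof of Lemma~\ref{lam2.4}; it simply declares the result ``well-known,'' so you have supplied exactly the computation the authors omitted. The one point to flag is your use of $F(0,\cdot)=I$: this follows from the Lagrangian definition \eqref{1.2} with $x(0,X)=X$, as you say, and is consistent with how the paper applies \eqref{2.1} later (e.g.\ in Remark~4.2), but note that Theorem~\ref{theorem1} allows abstract initial data $F_0\in H^3$; for the identity $\rho\det F=\rho_0$ to hold as stated one implicitly needs $\det F_0=1$, otherwise your transport argument yields $\rho\det F=\rho_0\det F_0$ along trajectories.
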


In the following, we use the notation, for the matrix $(A)_{3\times3}$ and $(B)_{3\times3}$
\be\nonumber
A:B=A_{ij}B_{ij},
\ee
where summation applied to terms with repeated index.

\section{Priori Estimate}
By the standard energy estimates, we have
\bl\label{lem3.1}(Priori estimate)
\[
\|\rho(t)\|_{L^1(\Omega)}= \|\rho_0\|_{L^1(\Omega)},
\]
\begin{multline}
\|\sqrt{\frac{\rho}{\rho_0}} F\|_{L^2(\Omega)}+\|\rho(t)\|_{L^\gamma(\Omega)}^\gamma+\|\rho|u|^2(t)\|_{L^1(\Omega)}+\|\nabla u\|_{L^2((0,t)\times\Omega)}^2
\\
\leq C(\|\rho_0\|_{L^\gamma(\Omega)}^\gamma+\|\rho_0|u_0|^2\|_{L^1(\Omega)}+\|F_0\|_{L^2(\Omega)}).
\end{multline}
\el

\begin{lemma}\label{lem3.2}
Assume the initial data as in Theorem \ref{theorem2},  $7\mu>\lambda$,  the density $\rho$ and deformation tensor $F$ satisfy
\be\label{3.1}
\|\rho\|_{L^\infty(0,T;L^\infty(\Omega))}+\|F\|_{L^\infty(0,T;L^{\infty}(\Omega))}< \infty,\footnote{Recall Lemma 2.4 and $\rho_0>\epsilon_0>0$, this condition implies $0<c_0<detF<\infty, a.e$.}
\ee
then, there exists $r\in (3,6)$, such that $\rho|u|^r\in L^\infty(0,T;L^1(\Omega))$, with
\be
\|\rho|u|^r\|_{L^\infty(0,T;L^1(\Omega))}\leq C.
\ee
Here $C$ depends on $T$, $\|\rho\|_{L^\infty(\Omega)}$, $\|F\|_{L^\infty(\Omega)}$ and initial data.
\end{lemma}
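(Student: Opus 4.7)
The plan is to derive an $L^r$ energy estimate for the momentum. Multiply the momentum equation in \eqref{equation1} by $r|u|^{r-2}u$ and integrate over $\Omega$; using the continuity equation to collapse the inertial terms yields
$$\frac{d}{dt}\int_\Omega \rho|u|^r\,dx = r\int_\Omega (Lu)\cdot|u|^{r-2}u\,dx - r\int_\Omega \nabla p\cdot|u|^{r-2}u\,dx + r\int_\Omega \nabla\cdot\Big(\tfrac{FF^T}{\det F}\Big)\cdot|u|^{r-2}u\,dx.$$
Integration by parts on the Lam\'e term, using the identity $u_i\partial_j u_i=|u|\partial_j|u|$, produces the quadratic form
$$r\mu\int|u|^{r-2}|\nabla u|^2\,dx + r\mu(r-2)\int|u|^{r-2}|\nabla|u||^2\,dx + r(\lambda+\mu)\int|u|^{r-2}(\ddiv u)^2\,dx + r(\lambda+\mu)(r-2)\int|u|^{r-4}u_iu_j(\partial_j u_i)(\ddiv u)\,dx,$$
the last term being indefinite. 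The critical step is to establish coercivity of this form: bounding the cross term via $|u_iu_j\partial_j u_i|\leq|u|^2|\nabla u|$ and applying weighted Young, positivity reduces to an algebraic relation between $\mu$, $\lambda$ and $r$ that the standing hypothesis $7\mu>\lambda$ from Theorem~\ref{theorem1} is designed to accommodate for some $r\in(3,6)$. One obtains a net dissipation $\geq c\int|u|^{r-2}|\nabla u|^2\,dx$ with $c>0$.

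For the pressure and elasticity contributions on the right-hand side I integrate by parts once more. The footnote (applying Lemma~\ref{lam2.4} with $\rho_0>\epsilon_0$) gives $\det F\geq c_0>0$, so $\|p\|_\infty\lesssim\|\rho\|_\infty^\gamma$ and $\|FF^T/\det F\|_\infty\lesssim c_0^{-1}\|F\|_\infty^2$, both finite by \eqref{3.1}. These terms are therefore dominated by $C\int|u|^{r-2}|\nabla u|\,dx$, which Cauchy--Schwarz and Young split as $\tfrac{c}{2}\int|u|^{r-2}|\nabla u|^2\,dx+C'\int|u|^{r-2}\,dx$, the first piece being absorbed into the dissipation.

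Finally, to close the estimate I use $\rho\geq c_0>0$ (from the footnote) to write $\int|u|^{r-2}\leq c_0^{-1}\int\rho|u|^{r-2}$, then H\"older combined with mass conservation $\int\rho=\|\rho_0\|_{L^1}$ (Lemma~\ref{lem3.1}) gives $\int\rho|u|^{r-2}\leq(\int\rho)^{2/r}(\int\rho|u|^r)^{(r-2)/r}$. Setting $y(t)=\int\rho|u|^r\,dx$, the energy inequality becomes a sublinear ODE of the form $y'(t)\leq C\,y(t)^{(r-2)/r}$, whose solution satisfies $y(t)^{2/r}\leq y(0)^{2/r}+Ct$ and is therefore bounded on $[0,T]$. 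The main obstacle is the coercivity step: absorbing the indefinite cross term so that the resulting constant $c$ stays positive for some $r$ strictly larger than $3$ is exactly the content of the hypothesis $7\mu>\lambda$, and it is this algebra that pins down the admissible range of $r$ inside $(3,6)$.
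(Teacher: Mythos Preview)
Your proposal is correct and follows essentially the same approach as the paper: multiply the momentum equation by $r|u|^{r-2}u$, establish coercivity of the Lam\'e quadratic form for some $r\in(3,6)$ under $7\mu>\lambda$, control the pressure and elasticity terms via the $L^\infty$ bounds from \eqref{3.1} and the lower bound on $\det F$, absorb half into the dissipation, and close by a sublinear Gronwall inequality. Your closing step, which inserts $\rho$ via the lower bound and then uses mass conservation in the H\"older inequality, is a slightly more explicit variant of the paper's estimate \eqref{3.8}.
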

\begin{proof}
Multiplying the equation $(\ref{equation1})_2$ by $r|u|^{r-2}u$ and integrating the resulting equation on $\Omega$ to obtain
\begin{multline}\label{3.4}
\frac{d}{dt}\int_\Omega\rho|u|^rdx+\int_\Omega r|u|^{r-2}(\mu|\nabla u|^2+(\lambda+\mu)(\ddiv u)^2)\\
+r(r-2)(\mu|u|^{r-2}|\nabla|u||^2+(\lambda+\mu)\ddiv u|u|^{r-3}u\cdot\nabla|u|)dx\\
=\int_\Omega r P(\rho)\ddiv(|u|^{r-2}u)dx-\int_\Omega r\nabla(|u|^{r-2}u):(\frac{1}{\det F} FF^T)dx.
\end{multline}
By using the fact $|\nabla u|\geq |\nabla|u||$, the term in second integrand can be estimated by
\begin{align}\label{3.5}
r|u|^{r-2}[\mu|\nabla u|^2 &    +(\lambda+\mu)(\ddiv u)^2  +(r-2)\mu|\nabla|u||^2-(\lambda+\mu)(r-2)|\nabla|u||\ddiv u]\\\nonumber
\geq & r|u|^{r-2}[\mu|\nabla u|^2+(r-2)(\mu-(\lambda+\mu)\frac{r-2}{4})|\nabla|u||^2]
\\\nonumber
\geq & C|u|^{r-2}|\nabla u|^2.
\end{align}
The pressure term
\be\label{3.6}
\int_{\Omega}P(\rho)\ddiv (|u|^{r-2}u)dx\leq C\int_\Omega\rho^{\frac{r-2}{2r}}|u|^{r-2}|\nabla u|dx,
\ee
and the elasticity term
\be\label{3.7}
\int_\Omega|\nabla(|u|^{r-2}u):\frac{1}{\det F} FF^T|dx\leq C\int_\Omega(|u|^{r-2}|\nabla u|\rho^{\frac{r-2}{2r}})dx.
\ee
By using
\be\label{3.8}
\int_\Omega \rho^{\frac{r-2}{2r}}|u|^{r-2}|\nabla u|dx\leq \varepsilon\int_\Omega|u|^{r-2}|\nabla u|^2dx+\frac{C}{\varepsilon}(\int_\Omega \rho|u|^rdx)^{\frac{r-2}{r}},
\ee
then, \eqref{3.4}-\eqref{3.7} imply the desired estimate.
\end{proof}

\begin{proposition}\label{proposition3.3}
Under the assumption \eqref{3.1},
then we  have
\begin{equation}\label{Prop3.3}
\|\nabla w\|_{L^\infty(0,T,L^2(\Omega))}, \|\rho^{\frac12}\partial_tw\|_{L^2(0,T,\Omega)},\|\nabla^2 w\|_{L^2(0,T,\Omega)} \leq C,
\end{equation}
where $C$ is constant depends on $\|\rho\|_{L^\infty(0,T,L^\infty(\Omega))}$, $\|F\|_{L^\infty(0,T,L^\infty(\Omega))}$ and
the initial data.
\end{proposition}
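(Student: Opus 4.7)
The plan is to view \eqref{1.14}, $Lw=\rho\dot u$, as an elliptic equation for $w$ at each fixed time and test it against $w_t$. Since $u=v+w$ one has $\dot u = v_t + w_t + u\cdot\nabla u$, and integration by parts in the Lam\'e operator gives the identity
\[
\frac12\frac{d}{dt}\!\left[\mu\|\nabla w\|_{L^2}^2+(\mu+\lambda)\|\ddiv w\|_{L^2}^2\right]+\int_\Omega\rho|w_t|^2\,dx=-\int_\Omega\rho\bigl(v_t+u\cdot\nabla u\bigr)\cdot w_t\,dx.
\]
Cauchy--Schwarz absorbs half of $\int_\Omega\rho|w_t|^2\,dx$ on the left-hand side, and the proof reduces to controlling the two terms $\int_\Omega\rho|v_t|^2\,dx$ and $\int_\Omega\rho|u|^2|\nabla u|^2\,dx$.

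For the nonlinear term I invoke Lemma~\ref{lem3.2}, choosing $r\in(3,6)$ and the H\"older-dual exponent $s=2r/(r-2)\in(3,6)$ so that
\[
\int_\Omega\rho|u|^2|\nabla u|^2\,dx\le \|\rho|u|^r\|_{L^1}^{2/r}\|\rho\|_{L^\infty}^{(r-2)/r}\|\nabla u\|_{L^s}^2\le C\|\nabla u\|_{L^s}^2.
\]
Splitting $\|\nabla u\|_{L^s}\le\|\nabla v\|_{L^s}+\|\nabla w\|_{L^s}$, the first piece is bounded directly by Lemma~\ref{lem2.1}(2) applied to \eqref{1.13} together with \eqref{3.1}, while the second is controlled by Sobolev interpolation $\|\nabla w\|_{L^s}\le C\|\nabla w\|_{L^2}^{1-\theta}\|\nabla^2w\|_{L^2}^{\theta}$ with $\theta\in(0,1)$. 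The factor $\|\nabla^2w\|_{L^2}$ is produced on-the-fly by Lemma~\ref{lem2.1}(1) on \eqref{1.14}, giving $\|\nabla^2w\|_{L^2}\le C\|\rho\dot u\|_{L^2}\le C\bigl(\|\rho^{1/2}w_t\|_{L^2}+\|\rho^{1/2}v_t\|_{L^2}+\|\sqrt\rho\,u\cdot\nabla u\|_{L^2}\bigr)$; Young's inequality then allows me to reinvest small multiples of $\|\nabla^2w\|_{L^2}^2$ and $\int_\Omega\rho|w_t|^2\,dx$ back into the left-hand side.

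The term $\int_\Omega\rho|v_t|^2\,dx$ is the delicate point, because at this stage no pointwise control of $\nabla\rho$ or $\nabla F$ is available and a direct $t$-differentiation of \eqref{1.13} would generate precisely such derivatives. The remedy is to use the continuity equation $\rho_t=-\ddiv(\rho u)$ and the transport equation $F_t=-u\cdot\nabla F+\nabla uF$ to rewrite
\[
\partial_t p=-\ddiv(pu)-(\gamma-1)p\,\ddiv u,
\]
and, analogously, to express $\partial_t\!\bigl(\tfrac{1}{\det F}FF^T\bigr)$ in pure divergence form with all the space-derivatives landing on $u$. This is the cancellation structure alluded to in the introduction and is the main obstacle of the proof: one must verify that every $\nabla F$ produced by $\partial_t(\det F)^{-1}$ cancels against corresponding pieces of $\partial_tF$ and $\partial_tF^T$, leaving only $\nabla u$-coefficients weighted by the bounded quantities $\rho$ and $F$. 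Once this is achieved, the divergence-form parts of Lemma~\ref{lem2.1} applied to $Lv_t$ yield $\|v_t\|_{L^2}\le C(1+\|\nabla u\|_{L^2})$, and hence $\int_\Omega\rho|v_t|^2\,dx\le C(1+\|\nabla u\|_{L^2}^2)$.

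Assembling the three steps produces a differential inequality of the form
\[
\frac{d}{dt}\!\left(\mu\|\nabla w\|_{L^2}^2+(\mu+\lambda)\|\ddiv w\|_{L^2}^2\right)+\int_\Omega\rho|w_t|^2\,dx\le C\bigl(1+\|\nabla u\|_{L^2}^2+\|\nabla w\|_{L^2}^2\bigr),
\]
which closes by Gronwall thanks to the a priori $L^2((0,T)\times\Omega)$-bound on $\nabla u$ from Lemma~\ref{lem3.1}. Integrating in time delivers the claimed $L^\infty(0,T;L^2)$-estimate on $\nabla w$ and the $L^2((0,T)\times\Omega)$-estimate on $\rho^{1/2}w_t$; the $L^2((0,T)\times\Omega)$-estimate on $\nabla^2 w$ then follows from a final application of Lemma~\ref{lem2.1}(1) to \eqref{1.14} integrated over $t\in(0,T)$.
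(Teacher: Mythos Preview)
Your outline is essentially the paper's own argument: the paper also rewrites \eqref{1.14} as $\rho\,\partial_t w-Lw=\rho G$ with $G=-u\cdot\nabla u-v_t$, tests against $w_t$, and uses the continuity and transport equations to rewrite $P_t$ and $\partial_t\bigl(\tfrac{1}{\det F}FF^T\bigr)$ so that all $\nabla\rho,\nabla F$ contributions collapse into a pure divergence $-\ddiv\bigl(u\otimes\tfrac{1}{\det F}FF^T\bigr)$, exactly the cancellation you single out. One small correction: in $\mathbb{R}^3$ the non-double-divergence pieces of $Lv_t$ only give $\|\nabla v_t\|_{L^2}\le C(1+\|\nabla u\|_{L^2})$ via Lemma~\ref{lem2.1}(2), not $\|v_t\|_{L^2}$; the paper closes this by estimating the quantity you actually need, $\|\sqrt{\rho}\,v_t\|_{L^2}\le\|\sqrt{\rho}\|_{L^3}\|v_t\|_{L^6}\le C\|\nabla v_t\|_{L^2}$.
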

\begin{proof}
Using the momentum equation, we get
\begin{equation}
\begin{cases}
\rho\partial_t w-L w=\rho G,\quad in\  [0,T)\times \Omega,\\
w(t,x)=0,\quad on\  [0,T)\times \partial \Omega,\quad w(0,x)=w_0(x) \quad in\  \Omega,
\end{cases}
\end{equation}
where
\begin{align}\label{prop3.3.1}
G&=   -u\mbox{div}u-L^{-1}\nabla(\partial_tP)+L^{-1}\nabla\partial_t(\frac{1}{\det F} FF^T)
\\\nonumber
&=   -u\mbox{div}u +L^{-1}\nabla\mbox{div}(Pu)+L^{-1}\nabla[(\rho P'(\rho)-P) \mbox{div}u]
\\\nonumber
& \quad +L^{-1}\nabla[\nabla u\cdot (\frac{1}{\det F} FF^T)+\frac{1}{\det F} FF^T(\nabla u)^T-\mbox{div}(u\otimes\frac{1}{\det F} FF^T)].
\end{align}
Multiplying the equation with $\partial_t w$ and integrating  over $\Omega$,
with the H\"{o}lder inequality, we get
\begin{equation}
\frac{d}{dt}\int_\Omega \mu |\nabla w|^2+(\lambda+\mu)|\mbox{div}w|^2dx+
\frac12\int_\Omega\rho|\partial_t w|^2dx \leq \frac12 \|\sqrt{\rho}G \|_{L^2(\Omega)}^2.
\end{equation}
Now, we shall estimate   $\|\sqrt{\rho}G \|_{L^2(\Omega)}^2$ term by term.
\begin{align}
\|\sqrt{\rho}u\mbox{div}u\|_{L^2(\Omega)} &\leq C\|\rho^{\frac1r}\|_{L^r(\Omega)}\|\nabla u\|_{L^{\frac{2r}{r-2}}(\Omega)}
\\\nonumber
 &\leq C(\epsilon)\|\nabla w\|_{L^2(\Omega)}+\epsilon \|\nabla^2 w\|_{L^2(\Omega)}+C,
\end{align}
Here $2\leq r<6$, and we used the interpolation inequality
\begin{equation}
\|\cdot\|_{L^r(\Omega)}\leq C(\epsilon)\|\cdot\|_{L^2(\Omega)}+\epsilon \|\nabla \cdot\|_{L^2(\Omega)}.
\end{equation}
From the estimates for Lam\'{e} operator and the energy estimates Lemma 3.1, we have
\begin{equation}
\|\sqrt{\rho}L^{-1}\nabla\mbox{div}(Pu)\|_{L^2(\Omega)}\leq C
\|Pu\|_{L^2(\Omega)}\leq C\|\sqrt{\rho}u\|_{L^2(\Omega)}\leq C,
\end{equation}
and
\begin{align}\label{prop3.3.2}
\|\sqrt{\rho}L^{-1}\nabla[\nabla u\cdot      &        (\frac{1}{\det F} FF^T)
+\frac{1}{\det F} FF^T(\nabla u)^T-\nabla\cdot (u\otimes \frac{1}{\det F} FF^T)]\|_{L^2(\Omega)}
\\\nonumber
 \leq& C
\|\sqrt{\rho}\|_{L^3(\Omega)}\|L^{-1}\nabla[\nabla u\cdot (\frac{1}{\det F} FF^T)
+\frac{1}{\det F} FF^T(\nabla u)^T]\|_{L^6(\Omega)}
\\\nonumber
&+C\|L^{-1}\nabla[\nabla\cdot (u\otimes \frac{1}{\det F} FF^T)]\|_{L^2(\Omega)}
 \\\nonumber
 \leq & C
\|\nabla u\|_{L^2(\Omega)}
+C.
\end{align}
Similarly, we have
\begin{equation}
\|\sqrt{\rho}L^{-1}  \nabla[(\rho P'-P)\mbox{div}u]\|_{L^2(\Omega)}
\leq C
\|\nabla u\|_{L^2(\Omega)}.
\end{equation}
Then, the results \eqref{Prop3.3} follows from \eqref{prop3.3.1}-\eqref{prop3.3.2} and energy estimates Lemma 3.1.
\end{proof}

\begin{corr}
Under the assumption of Lemma \ref{lem3.2}, we have
\begin{equation}
\|\nabla u\|_{L^\infty(0,T,L^2)}, \|\nabla u\|_{L^2(0,T,L^q(\Omega))} \leq C,
\end{equation}
for any $q\in [2,6]$.
\end{corr}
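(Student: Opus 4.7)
The approach is to use the decomposition $u=v+w$ from \eqref{1.12}--\eqref{1.14} and estimate each piece by elliptic regularity for the Lam\'e operator, since $\nabla u=\nabla v+\nabla w$.

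\textbf{Bound on $w$.} Proposition \ref{proposition3.3} already delivers $\nabla w\in L^\infty(0,T;L^2(\Omega))$ and $\nabla^2 w\in L^2(0,T;L^2(\Omega))$. Sobolev embedding in $\mathbb{R}^3$ gives $\|\nabla w\|_{L^6}\leq C\|\nabla^2 w\|_{L^2}$, so $\nabla w\in L^2(0,T;L^6)$; interpolating with $L^\infty(0,T;L^2)$ yields $\nabla w\in L^2(0,T;L^q)$ for every $q\in[2,6]$, and in particular the $L^\infty(0,T;L^2)$ part of the corollary for this piece is immediate.

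\textbf{Bound on $v$.} Rewrite \eqref{1.13} in divergence form, $Lv=\nabla\cdot\bigl(pI-\tfrac{1}{\det F}FF^T\bigr)$, and apply Lemma \ref{lem2.1}(2) to get
\[
\|\nabla v\|_{L^q(\Omega)}\leq C\Bigl(\|p\|_{L^q(\Omega)}+\bigl\|\tfrac{1}{\det F}FF^T\bigr\|_{L^q(\Omega)}\Bigr).
\]
For the pressure, mass conservation plus the hypothesis $\rho\in L^\infty$ gives $\rho\in L^\infty(0,T;L^p)$ for all $p\in[1,\infty]$, whence $p=C_0\rho^\gamma\in L^\infty(0,T;L^q)$ for every $q\in[1,\infty]$. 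For the elastic stress, combine Lemma \ref{lam2.4} with $\rho_0>\epsilon_0$ and $\rho,F\in L^\infty$ to obtain a two-sided bound $0<c_0\leq\det F\leq C$ (upper bound from $F\in L^\infty$, lower bound from $\rho\det F=\rho_0$ and $\rho\in L^\infty$). The energy bound $\|\sqrt{\rho/\rho_0}\,F\|_{L^2}\leq C$ from Lemma \ref{lem3.1} then upgrades to $F\in L^\infty(0,T;L^2)$, and interpolation with $F\in L^\infty$ gives $F\in L^\infty(0,T;L^q)$ for every $q\in[2,\infty]$. Hence $\tfrac{1}{\det F}FF^T\in L^\infty(0,T;L^q)$ for $q\in[2,\infty]$, so $\nabla v\in L^\infty(0,T;L^q)$ for $q\in[2,6]$; since $T<\infty$, this is stronger than the claimed $L^2(0,T;L^q)$ bound.

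\textbf{Expected obstacle.} The main technical point is passing from the weighted energy estimate $\sqrt{\rho/\rho_0}\,F\in L^2$ to an unweighted $L^2$ bound on $F$, which requires a uniform pointwise lower bound on $\rho$. That lower bound is made available by the algebraic identity $\rho\det F=\rho_0$ together with the initial positivity $\rho_0>\epsilon_0$ and the working assumption $F\in L^\infty$; without this structural feature of the viscoelastic system, controlling the elastic stress beyond $L^\infty$ in the scale of Lebesgue spaces would be considerably more delicate.
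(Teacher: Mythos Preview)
Your proof is correct and follows the approach the paper intends: the corollary is stated without proof immediately after Proposition~\ref{proposition3.3}, and the implicit argument is precisely the decomposition $u=v+w$, with Proposition~\ref{proposition3.3} handling $w$ and the elliptic estimate of Lemma~\ref{lem2.1}(2) handling $v$ via the divergence form $Lv=\nabla\cdot(pI-\tfrac{1}{\det F}FF^{T})$.

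One minor simplification: your ``expected obstacle'' paragraph is more work than needed. You do not have to pass through an unweighted $L^2$ bound on $F$ itself. The elastic stress $\tfrac{1}{\det F}FF^{T}=\tfrac{\rho}{\rho_0}FF^{T}$ already lies in $L^\infty(0,T;L^1)$ directly from the energy estimate $\tfrac{\rho}{\rho_0}|F|^2\in L^1$ of Lemma~\ref{lem3.1}, and in $L^\infty(0,T;L^\infty)$ from the standing assumptions $\rho,F\in L^\infty$ and $\rho_0\geq\epsilon_0$; interpolation then gives every $L^q$, $q\in[1,\infty]$, without invoking a pointwise lower bound on $\rho$. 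Your route through $\rho\geq \epsilon_0/\|\det F\|_{L^\infty}$ is of course valid too, just slightly longer.
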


\section{Proof of Theorem 1.2}
\begin{proposition}\label{prop1}
Suppose $T^\star<\infty$ is the maximal existence time, $\forall T,  0\leq  T< T^\star$ and $3<q<6$, if
\be\label{4_4}
\|\rho,\rho^{-1}\|_{L^\infty(0,T, L^\infty(\Omega))}+\|F\|_{L^\infty(0,T, L^{\infty}(\Omega))}<\infty,
\ee
then, we obtain
\be\label{4_5}
\|\nabla^2 w\|_{L^2(0,T;L^q(\Omega))}\leq C,
\ee
 and
\be\label{4_6}
\int_0^T\|\nabla u\|_{L^\infty}ds<\infty.
\ee
\end{proposition}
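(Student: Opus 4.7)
The plan rests on the decomposition $u=v+w$ from \eqref{1.12}--\eqref{1.14} combined with the Brezis--Gallouet type logarithmic estimate Lemma~\ref{lam2.3}. Under \eqref{4_4} together with Lemma~\ref{lam2.4}, $\det F=\rho_0/\rho$ is bounded above and uniformly away from zero, so $f:=pI-\tfrac{1}{\det F}FF^T$ lies in $L^\infty\cap L^2$ uniformly in time (the $L^2$ part supplied by Lemma~\ref{lem3.1}). Writing $Lv=\ddiv f$ and invoking Lemma~\ref{lem2.1}(2) and Lemma~\ref{lam2.2} yields $\|\nabla v\|_{L^q(\Omega)}\le C$ for every $q\in(1,\infty)$ and $\|\nabla v\|_{BMO(\Omega)}\le C$, both uniformly on $[0,T)$.

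To prove \eqref{4_5} I upgrade $\nabla^2 w$ from $L^2L^2$ (Proposition~\ref{proposition3.3}) to $L^2L^q$. Since $Lw=\rho\dot u$, Lemma~\ref{lem2.1}(1) reduces this to an $L^2(0,T;L^q)$ estimate on $\rho\dot u$. For that I carry out a Hoff-type energy estimate for the material derivative: apply $\partial_t+\ddiv(\cdot\,u)$ to the momentum equation and test with $\dot u$. The pressure and viscous parts are standard; the time derivative of the elasticity flux produces $\partial_tF$ and $\partial_t\det F$ contributions which, through the transport equations $F_t+u\cdot\nabla F=\nabla u\,F$ and $(\det F)_t+u\cdot\nabla\det F=(\det F)\ddiv u$, collapse via a structural cancellation to quantities of the form $\nabla u\cdot\tfrac{FF^T}{\det F}$ that are controlled in $L^\infty$ by \eqref{4_4}. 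This is the cancellation highlighted in the introduction; it avoids losing a derivative on $F$ and yields
\begin{equation*}
\|\sqrt\rho\,\dot u\|_{L^\infty(0,T;L^2)}+\|\nabla\dot u\|_{L^2(0,T;L^2)}\le C.
\end{equation*}
Combined with $\rho^{-1}\in L^\infty$ and the Sobolev embedding $H^1\hookrightarrow L^6$, this puts $\dot u$ in $L^2(0,T;L^q)$ for any $q\in(3,6]$, hence $\|\nabla^2 w\|_{L^2(0,T;L^q)}\le C$.

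For \eqref{4_6} split $\|\nabla u\|_{L^\infty}\le\|\nabla v\|_{L^\infty}+\|\nabla w\|_{L^\infty}$. Since $q>3$, Sobolev gives $\|\nabla w\|_{L^\infty}\le C(\|\nabla^2 w\|_{L^q}+\|\nabla w\|_{L^2})$, and Cauchy--Schwarz in time puts $\nabla w$ in $L^1(0,T;L^\infty)$. For $\nabla v$, Lemma~\ref{lam2.3} produces
\begin{equation*}
\|\nabla v\|_{L^\infty}\le C(1+\|\nabla v\|_{BMO})\ln(e+\|\nabla^2 v\|_{L^q}),
\end{equation*}
and elliptic regularity gives $\|\nabla^2 v\|_{L^q}\le C(\|\nabla\rho\|_{L^q}+\|\nabla F\|_{L^q})$. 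Standard $L^q$ estimates on the transport equations for $\rho$ and $F$, together with a parallel cancellation that absorbs the $\nabla^2 v$ self-terms, then give
\begin{equation*}
\tfrac{d}{dt}Y(t)\le C\|\nabla u\|_{L^\infty}Y(t)+C\|\nabla^2 w\|_{L^q},\qquad Y(t):=e+\|\nabla\rho(t)\|_{L^q}+\|\nabla F(t)\|_{L^q}.
\end{equation*}
Inserting the logarithmic bound on $\|\nabla v\|_{L^\infty}$ converts this into an Osgood-type inequality that closes on any $[0,T]\subset[0,T^\star)$, yielding $\int_0^T\|\nabla u\|_{L^\infty}\,ds<\infty$.

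The main difficulty is the Hoff-type step: the time derivative of $\ddiv(\tfrac{1}{\det F}FF^T)$ contains $\nabla u\,F$ and $\ddiv u\,FF^T$ pieces that would, at face value, require a derivative on $F$; isolating the structural cancellation between the transport equations for $F$ and $\det F$ to turn these into $L^\infty$-controlled quantities is the technical heart of the argument. The parallel cancellation needed when running the $L^q$ transport estimate for $\nabla\rho,\nabla F$ in the final step is of the same nature.
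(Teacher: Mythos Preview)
Your plan coincides with the paper's proof in all essential respects: the Hoff-type material-derivative estimate with the structural cancellation in the time-differentiated elasticity flux, elliptic regularity for $Lw=\rho\dot u$ to pass to $L^2L^q$, and the Brezis--Gallouet log estimate on $\nabla v$ coupled with $L^q$ transport bounds for $\nabla\rho,\nabla F$ closed by a log-Gronwall/Osgood argument. One small remark: in the final step there is no genuine ``parallel cancellation'' for the $\nabla^2 v$ self-terms; the paper (and your argument, in effect) simply uses $\|\nabla^2 v\|_{L^q}\le C(\|\nabla\rho\|_{L^q}+\|\nabla F\|_{L^q})\le CY(t)$ from elliptic regularity and absorbs this linearly into the Gronwall factor, so your inequality for $Y$ should read $\tfrac{d}{dt}Y\le C(1+\|\nabla u\|_{L^\infty})Y+C\|\nabla^2 w\|_{L^q}$ before inserting the logarithmic bound.
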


\begin{proof}
We first estimate $ \|\nabla^2 w\|_{L^2(0,T;L^q(\Omega))}$.
By Lemma \ref{lem2.1} and \eqref{1.14}, we have
\begin{equation}\label{4.4}
\|\nabla^2 w\|_{L^q(\Omega)}\leq \|\rho\dot{u}\|_{L^q(\Omega)}.
\end{equation}
Noting,
\be\label{u}
\rho\dot{u}- Lu+\nabla P=\nabla\cdot(\frac{1}{\det F} FF^T),
\ee
taking $\partial_t$ to this equation, we get
\be\label{4_1}
\rho_t \dot{u}+\rho\dot{u}_t-Lu_t+\nabla P_t=\nabla\cdot(\frac{1}{\det F} FF^T)_t.
\ee
Applying $u\otimes$  and divergence to (\ref{u}), we have
\be\label{4_2}
\nabla\cdot(\rho u)\dot{u}+\rho u\cdot\nabla\dot{u}-\nabla\cdot(u\otimes Lu)+\nabla\cdot(u\otimes\nabla P)=\nabla\cdot[u\otimes\nabla\cdot(\frac{1}{\det F} FF^T)].
\ee
Adding \eqref{4_1} and \eqref{4_2} together, we get
\begin{multline}
 \rho \dot{u}_t+\rho u\cdot\nabla\dot{u}+\nabla P_t+\nabla\cdot[u\otimes\nabla P]\\
 =\mu[\triangle u_t+\nabla\cdot(u\otimes\triangle u)]
+(\lambda+\mu)[\nabla\ddiv u_t+\ddiv(u\otimes\nabla\ddiv u)]\\
+\nabla\cdot[(\frac{1}{\det F} FF^T)_t+u\otimes\nabla\cdot(\frac{1}{\det F} FF^T)].
\end{multline}
Multiplying above equation by $\dot{u}$ and integrating on $\Omega$, we get
\begin{multline}\label{4.10}
\frac{d}{dt}\int_\Omega\frac{1}{2}\rho|\dot{u}|^2dx-\mu\int_\Omega(\triangle u_t+\ddiv(u\otimes\triangle u))\cdot \dot{u}dx
\\-(\lambda+\mu)\int_\Omega(\nabla\ddiv u_t+\ddiv(u\otimes\nabla\ddiv u))\cdot \dot{u}dx\\
=\int_\Omega P_t\ddiv \dot{u}dx
+\int_\Omega u\cdot\nabla\dot{u}\cdot\nabla pdx
\\+\int_\Omega \nabla\cdot[(\frac{1}{\det F} FF^T)_t+u\otimes\nabla\cdot(\frac{1}{\det F} FF^T)]\cdot\dot{u}dx.
\end{multline}
The estimates of the second and the third terms on the left hand side, as well as  the first and the second terms on the right hand side
in \eqref{4.10}, are the same as in \cite{zhangzhifei1,zhangzhifei3}. For completeness, we give a brief proof as following
\begin{align}
-\int_\Omega(\triangle u_t &+\ddiv(u\otimes\triangle u))\cdot\dot{u}dx
=\int_\Omega\nabla u_t:\nabla\dot{u}+u\otimes\triangle u:\nabla\dot{u} dx\\\nonumber
=&\int_\Omega|\nabla\dot{u}|^2-\nabla(u\cdot\nabla u):\nabla\dot{u}+u\times\triangle u:\nabla\dot{u}dx\\\nonumber
=&\int_\Omega[|\nabla\dot{u}|^2-(\nabla u\nabla u):\nabla\dot{u}+((u\cdot\nabla)\nabla\dot{u}):\nabla u
\\\nonumber
 &-(\nabla u\nabla\dot{u}):\nabla u-((u\cdot\nabla)\nabla\dot{u}):\nabla u]dx\\\nonumber
\geq & \int_\Omega[\frac{3}{4}|\nabla\dot{u}|^2-C|\nabla u|^4]dx,
\end{align}
and
\begin{align}
-\int_\Omega &(\nabla\ddiv u_t      +\ddiv(u\otimes\nabla\ddiv u))\cdot \dot{u}dx\\\nonumber
=&\int_\Omega[|\ddiv\dot{u}|^2-\ddiv\dot{u}\nabla u:(\nabla u)^T-\ddiv u(\nabla\dot{u})^T:\nabla u+\ddiv \dot{u}(\ddiv u)^2]dx\\\nonumber
\geq&\int_\Omega[\frac{1}{2}|\ddiv\dot{u}|^2-\frac{1}{4}|\nabla\dot{u}|^2-C|\nabla u|^4]dx.\\\nonumber
\end{align}
We continue to estimate the pressure term.
\begin{align}
\int_\Omega P_t\ddiv \dot{u}&+(u\cdot\nabla\dot{u})\cdot\nabla Pdx\\\nonumber
=&\int_\Omega-\rho P'(\rho)\ddiv u\ddiv \dot{u}+P[\ddiv u\ddiv\dot{u} - (\nabla u)':(\nabla\dot{u})]dx\\\nonumber
\leq& C\|\nabla u\|_{L^2(\Omega)}\|\nabla\dot{u}\|_{L^2(\Omega)}\leq C\|\nabla\dot{u}\|_{L^2(\Omega)}.
\end{align}

Next, we estimate the elasticity term $\int_\Omega \nabla\cdot[(\frac{1}{\det F} FF^T)_t+u\otimes\nabla\cdot(\frac{1}{\det F} FF^T)]\cdot\dot{u}dx$.
From (\ref{equation1}), we get
\begin{align}\label{4.15}
&\int_\Omega \nabla\cdot   [(\frac{1}{\det F} FF^T)_t +u\otimes\nabla\cdot(\frac{1}{\det F} FF^T)]\cdot\dot{u}dx\\\nonumber
=& \int_\Omega \nabla\cdot[\nabla u\cdot(\frac{1}{\det F} FF^T)+\frac{1}{\det F} FF^T(\nabla u)^T-\frac{1}{\det F} FF^T(\nabla\cdot u)]\dot{u}dx\\\nonumber
&+\int_\Omega\nabla\cdot[u\otimes\nabla\cdot(\frac{1}{\det F} FF^T)-u\cdot\nabla(\frac{1}{\det F} FF^T)]\dot{u}dx
\end{align}
Since $\|\rho\|_{L^\infty(0,T,L^\infty(\Omega))}$, $\|F\|_{L^\infty(0,T,L^\infty(\Omega))}<\infty$, we get the bound for  the  first three terms on the right hand side of the above equation as following
\begin{equation}
|\int_\Omega \nabla\cdot[\nabla u\cdot(\frac{1}{\det F} FF^T)+\frac{1}{\det F} FF^T(\nabla u)^T-\frac{1}{\det F} FF^T(\nabla\cdot u)]\dot{u}dx|
\leq C\int_\Omega |\nabla u||\nabla\dot{u}|dx.
\end{equation}
As to the last term on the right hand side of \eqref{4.15}, we have a cancelation
\begin{align}
 \nabla\cdot[u\otimes\nabla\cdot&(\frac{1}{\det F} FF^T)-u\cdot\nabla(\frac{1}{\det F} FF^T)]\cdot\dot{u}\\\nonumber
=&-\partial_i[\partial_j u_i(\frac{1}{\det F} F_{jk}F_{lk})]\dot{u}_l+\partial_i[\partial_j u_j(\frac{1}{\det F} F_{jk}F_{lk})]\dot{u}_l.
\end{align}
Therefore,
\begin{align}
 \int_\Omega \nabla\cdot[u\otimes\nabla\cdot(\frac{1}{\det F} FF^T)   &   -u\cdot\nabla(\frac{1}{\det F} FF^T) ]\cdot\dot{u}dx\\\nonumber
\leq& C\|\nabla\dot{u}\|_{L^2(\Omega)}\|\nabla u\|_{L^2(\Omega)}.\\\nonumber
\end{align}
Combining the above estimates,
recalling Lemma \ref{lem3.1}, we get
\begin{align}\label{4_3}
\frac{d}{dt}\int_\Omega\rho|\dot u|^2dx+ \int_\Omega|\nabla\dot u|^2dx\leq C(1+\|\nabla u\|^4_{L^4(\Omega)}).
\end{align}
Noting  that
\begin{align}
 \|\nabla u\|_{L^4(\Omega)}^4&\leq\|\nabla u\|_{L^2(\Omega)}\|\nabla u\|_{L^6(\Omega)}^3\\\nonumber
&\leq C\|\nabla u\|_{L^6(\Omega)}^2(\|\nabla w\|_{L^6(\Omega)}+\|\nabla v\|_{L^6(\Omega)})\\\nonumber
&\leq C\|\nabla u\|_{L^6(\Omega)}^2(1+\|\dot u\|_{L^2(\Omega)}).
\end{align}
Substituting this estimate into (\ref{4_3}) and using Corollary 3.4, we have $\|\nabla u(t)\|_{L^6(\Omega)}^2\in L^1(0,T)$,
then conclude by Gronwall's inequality that
\be
\|\sqrt{\rho}\dot u\|_{L^\infty(0,T;L^2(\Omega))}+\|\nabla\dot u\|_{L^2((0,T)\times\Omega)}\leq C.
\ee
Recalling \eqref{4.4} and Lemma \ref{lem2.1}, (4.2) is verified.

In the following, we shall prove (4.3). Noting that
\begin{align}
 \|\nabla u\|_{L^1(0,T;L^\infty(\Omega))}&\leq C\|u\|_{L^1(0,T;W^{2,q}(\Omega))}\\\nonumber
&\leq C(\|v\|_{L^1(0,T;W^{2,q}(\Omega))}+\|w\|_{L^1(0,T;W^{2,q}(\Omega))}),
\end{align}
where $3< q < 6$. By Lemma \ref{lem2.1}, $\|v\|_{W^{2,q}(\Omega)}\leq\|\nabla P\|_{L^q(\Omega)}+\|\nabla\cdot(\frac{1}{\det F} FF^T)\|_{L^q(\Omega)}$, from the assumption \eqref{4_4} and lemma 2.4, we have
\be
\|\nabla P\|_{L^q(\Omega)}\leq C\|\nabla \rho\|_{L^q(\Omega)},
\ee
and
\begin{align}
&\|\nabla\cdot(\frac{1}{\det F}FF^T)\|_{L^q(\Omega)}\\\nonumber
\leq& C\|\nabla \rho\|_{L^q(\Omega)}\|F\|_{L^\infty}^2+C\|\rho\|_{L^\infty(\Omega)}\|\nabla F\|_{L^q(\Omega)}\|F\|_{L^\infty}\\\nonumber
\leq& C\|\nabla\rho\|_{L^q(\Omega)}+C\|\nabla F\|_{L^q(\Omega)}.
\end{align}
Therefore, recalling \eqref{4_5}, to bound $\|\nabla u\|_{L^1(0,T,L^\infty(\Omega))}$,  it suffices  to bound $\|\nabla \rho\|_{L^q(\Omega)}$ and
$\|\nabla F\|_{L^q(\Omega)}$.
Taking  gradient to both sides of  equation $(\ref{equation1})_1$ and $(\ref{equation1})_3$ , we have
\be\label{4.23}
(\nabla\rho)_t+u\cdot\nabla(\nabla\rho)=-\nabla(\rho\nabla\cdot u)-\nabla u\cdot\nabla\rho,
\ee
and
\begin{align}\label{4.24}
\rho(\nabla F)_t+\rho u\cdot\nabla(\nabla F)=\rho\nabla(\nabla uF)-\rho\nabla u\nabla F.
\end{align}
Multiply by $q\rho|\nabla\rho|^{q-2}\nabla\rho$ and $q\rho|\nabla F|^{q-2}\nabla F$  to \eqref{4.23} and \eqref{4.24} respectively, we have
\begin{align}\label{4.25}
\frac{d}{dt}\int_\Omega & \rho|\nabla\rho|^qdx\\\nonumber
=& -q\int_\Omega\nabla(\rho\nabla\cdot u)\rho|\nabla\rho|^{q-2}\nabla\rho dx-q\int_\Omega\nabla u\cdot\nabla\rho \rho|\nabla\rho|^{q-2}\nabla\rho dx\\\nonumber
\leq & C\int_\Omega \rho|\nabla u||\nabla\rho|^q dx+ C\int_\Omega \rho |\nabla^2 u||\nabla\rho|^{q-1}dx\\\nonumber
\leq & C\|\nabla u\|_{L^\infty(\Omega)}\|\rho^{1/q}\nabla\rho\|_{L^q(\Omega)}^q+C\|\nabla^2 u\|_{L^q(\Omega)}\|\rho^{1/q}\nabla\rho\|_{L^q(\Omega)}^{q-1},
\end{align}
as well as
\begin{align}\label{4.26}
\frac{d}{dt}\int_\Omega & \rho|\nabla F|^qdx\\\nonumber
&\leq C\int_{\Omega}|\rho\nabla(\nabla u F)|\nabla F|^{q-2}\nabla F|dx +C\int_{\Omega}|\rho\nabla u\nabla F|\nabla F|^{q-2}\nabla F|dx\\\nonumber
&\leq C\|\nabla^2 u\|_{L^q(\Omega)}\|\rho^{1/q}\nabla F\|_{L^q(\Omega)}^{q-1}+C\|\nabla u\|_{L^{\infty}(\Omega)}\|\rho^{1/q}\nabla F\|^q_{L^q(\Omega)}\\\nonumber
\end{align}
From \eqref{4.25} and \eqref{4.26}, we get
\begin{align}
&\frac{d}{dt}(\|\rho^{1/q}\nabla \rho\|_{L^q(\Omega)}^q+\|\rho^{1/q}\nabla F\|_{L^q(\Omega)}^q)\\\nonumber
\leq & C\|\nabla^2 u\|_{L^q(\Omega)}(\|\rho^{1/q}\nabla \rho\|_{L^q(\Omega)}^{q-1}+\|\rho^{1/q}\nabla F\|_{L^q(\Omega)}^{q-1})\\\nonumber
&+C\|\nabla u\|_{L^\infty(\Omega)}(\|\rho^{1/q}\nabla \rho\|_{L^q(\Omega)}^q+\|\rho^{1/q}\nabla F\|_{L^q(\Omega)}^q).
\end{align}
Noting that
\be
\|\nabla u\|_{L^\infty(\Omega)}\leq \|\nabla v\|_{L^\infty(\Omega)}+ \|\nabla w\|_{L^\infty(\Omega)},
\ee
\be
\|\nabla^2 u\|_{L^q(\Omega)}\leq \|\nabla^2 v\|_{L^q(\Omega)}+ \|\nabla^2 w\|_{L^q(\Omega)},
\ee
and by Lemma 2.2,
\begin{align}
 \|\nabla v\|_{L^\infty(\Omega)}&\leq 1+\|\nabla v\|_{BMO(\Omega)}\ln(e+\|\nabla^2\|_{L^q(\Omega)})\\\nonumber
&\leq 1+ C\ln(e+\|\nabla^2 v\|_{L^q(\Omega)}).
\end{align}
Then from \eqref{4_5},  we obtain
\begin{multline}
\frac{d}{dt}(e+\|\rho^{1/q}\nabla\rho\|_{L^q(\Omega)}+\|\rho^{1/q}\nabla F\|_{L^q(\Omega)})\\
\leq C[1+\ln(e+\|\rho^{1/q}\nabla\rho\|_{L^q(\Omega)}+\|\rho^{1/q}\nabla F\|_{L^q(\Omega)})]\\
(e+\|\rho^{1/q}\nabla\rho\|_{L^q(\Omega)}+\|\rho^{1/q}\nabla F\|_{L^q(\Omega)}).
\end{multline}
Using Gronwall inequality, we get $\|\rho^{1/q}\nabla\rho\|_{L^q(\Omega)}$  and $\|\rho^{1/q}\nabla F\|_{L^q(\Omega)}$  are finite, hence $\|\nabla^2 v\|_{L^q(\Omega)}$ is bounded.

\end{proof}

\begin{remark}\label{4_7}
By (\ref{2.1}), we have $\rho^{-1}=\det F/\rho_0$. When $\rho_0$ has lower bound $\epsilon_0$ and $F\in L^\infty(\Omega)$, we have $\|\rho^{-1}\|_{L^\infty(\Omega)}\leq\frac{\|F\|_{L^\infty(\Omega)}^3}{\epsilon_0}<\infty$.
Namely, if we restrict $\|F\|_{L^\infty(0,T,L^\infty(\Omega))}<\infty$, then $\|\rho^{-1}\|_{L^\infty(0,T,L^\infty(\Omega))}<\infty$
will hold naturally.
\end{remark}
Now we are to prove our main theorem by the contradiction argument.
\\
 {\bf Proof of Theorem 1.2.}
Suppose the maximal existence time of solution $T^\star<\infty$, and
$
\|\rho\|_{L^\infty(0,T^\star, L^\infty(\Omega))}+\|F\|_{L^\infty(0,T^\star, L^{\infty}(\Omega))}< \infty.
$
 By Remark \ref{4_7} and Proposition \ref{prop1}, we obtain
\[
\|\nabla u\|_{L^1(0,T^\star, L^\infty(\Omega))}< \infty.
\]
This is a contradiction with Theorem \ref{theorem1}.

\section*{Acknowledgement}
The authors  express their sincere appreciation to professor Zhen Lei for his
constructive suggestion and discussion. They also thank  Dr. Geng Chen for his kindness help.  The work proceeded substantially while
the first author was visiting the Department of Mathematics, Penn State University. He thanks
deeply the Department of Mathematics for the warm hospitality.
The first author was partly supported by  the NSFC (grant: 11001088), and the Guang-Zhou Pearl River New-Star 2012001.


\end{document}